\documentclass[conference]{IEEEtran}
\IEEEoverridecommandlockouts 

\usepackage{cite}
\usepackage{amsmath,amssymb,amsfonts}
\usepackage{algorithmic}
\usepackage{graphicx}
\usepackage{textcomp}
\usepackage{xcolor}
\usepackage{booktabs}
\usepackage{amsmath}
\usepackage{amssymb}
\usepackage{amsthm}
\usepackage[hidelinks]{hyperref} 
\usepackage{balance}
\usepackage{etoolbox} 
\theoremstyle{plain}
\newtheorem{theorem}{Theorem}

\theoremstyle{plain}
\newtheorem{lemma}{Lemma}

\makeatletter
\newif\if@printsuffix \@printsuffixtrue
\newif\if@firstpara   \@firstparatrue
\let\orig@par\par
\def\par{%
  \ifhmode
    \if@printsuffix
      \if@firstpara
        \global\@firstparafalse
      \else
        \space//%
      \fi
    \fi
  \fi
  \orig@par
}
\pretocmd{\section}{\global\@firstparatrue}{}{}
\pretocmd{\subsection}{\global\@firstparatrue}{}{}
\pretocmd{\subsubsection}{\global\@firstparatrue}{}{}
\pretocmd{\paragraph}{\global\@firstparatrue}{}{}
\pretocmd{\subparagraph}{\global\@firstparatrue}{}{}
\AtBeginEnvironment{abstract}{\@printsuffixfalse \global\@firstparatrue}%
\AtEndEnvironment{abstract}{\@printsuffixtrue}%
\AtBeginEnvironment{IEEEkeywords}{\@printsuffixfalse \global\@firstparatrue \setlength{\parskip}{0pt}}%
\AtEndEnvironment{IEEEkeywords}{\@printsuffixtrue}%

\newlength{\BodyParskip}
\newif\if@afterheading
\@afterheadingfalse
\pretocmd{\section}{\global\@afterheadingtrue \parskip 0pt}{}{}
\pretocmd{\subsection}{\global\@afterheadingtrue \parskip 0pt}{}{}
\pretocmd{\subsubsection}{\global\@afterheadingtrue \parskip 0pt}{}{}
\pretocmd{\paragraph}{\global\@afterheadingtrue \parskip 0pt}{}{}
\pretocmd{\subparagraph}{\global\@afterheadingtrue \parskip 0pt}{}{}
\everypar{\if@afterheading \global\@afterheadingfalse \parskip\BodyParskip \fi \ignorespaces}

\makeatother

\title{On a group of invariances in a class of functions}

\author{%
  \IEEEauthorblockN{Shravan Mohan}
  \IEEEauthorblockA{17-004, Mantri Residency, Bangalore}
}

\begin{document}

\maketitle

\begingroup
\setlength{\parskip}{0pt}
\begin{abstract}
A class of parametric functions formed by alternating compositions of multivariate polynomials and rectification‑style monomial maps of the form $(x)_+^{\alpha}$ is studied (the layer-wise exponents $\alpha$ are treated as fixed hyperparameters and are not optimized). For this family, nontrivial parametric invariances are identified and characterized: distinct parameter settings that induce identical input–output maps. A constructive description of the invariance structure is provided, enabling sparse function representations, parameter obfuscation, and potential dimensionality reduction for optimization.
\end{abstract}
\endgroup

\begin{IEEEkeywords}
Parametric invariances, model identifiability, polynomial compositions, monomial maps, canonical representation, model compression
\end{IEEEkeywords}

\section{Introduction}
\noindent Parametric maps obtained by alternating multivariate polynomials and componentwise monomial (rectification‑style) maps are studied. Formally, let \(X \subseteq \mathbb{R}^{d_0}\) and consider maps \(f_{\theta}: X \to \mathbb{R}^{d_L}\) of the form
\[
  f_{\theta} \;=\; P_{L} \circ M_{L-1} \circ P_{L-1} \circ \cdots \circ M_{1} \circ P_{1},
\]
where for each layer $\ell$
\begin{itemize}
  \item \(P_{\ell}:\mathbb{R}^{d_{\ell-1}}\to\mathbb{R}^{d_{\ell}}\) is a multivariate polynomial represented as a sum of powers of affine linear forms per output. Concretely, for each output \(j=1,\dots,d_{\ell}\), fix an integer \(K_{\ell,j}\ge 1\) and degrees \(r^{(\ell)}_{j,k}\in\mathbb{N}_{0}\) (treated as hyperparameters). With output-specific affine forms \((w^{(\ell)}_{j,k}, b^{(\ell)}_{j,k})\in\mathbb{R}^{d_{\ell-1}}\times\mathbb{R}\),
    \[
      [P_{\ell}(z)]_{j} \;=\; \sum_{k=1}^{K_{\ell,j}} \Big( (w^{(\ell)}_{j,k})^{\top}z + b^{(\ell)}_{j,k} \Big)^{\,r^{(\ell)}_{j,k}}.
    \]
  \item \(M_{\ell}:\mathbb{R}^{d_{\ell}}\to\mathbb{R}^{d_{\ell}}\) is a componentwise rectified monomial map with a layer-wise exponent \(\alpha_{\ell}>0\) (independent of the element index):
    \[
      [M_{\ell}(z)]_{i} \;=\; (z_{i})_+^{\alpha_{\ell}}.
    \]
\end{itemize}

\noindent The representation of \(P_{\ell}\) as a sum of powers of affine forms is central to the framework. By the classical Waring decomposition, every homogeneous polynomial of degree \(d\) in \(n\) variables can be expressed as a sum of \(d\)-th powers of linear forms; the minimal number of such terms required is called the \emph{Waring rank}. For inhomogeneous polynomials, homogenization yields an analogous decomposition in terms of affine forms. Here, each output component \([P_{\ell}(z)]_{j}\) is represented as \(\sum_{k=1}^{K_{\ell,j}} ((w^{(\ell)}_{j,k})^{\top}z + b^{(\ell)}_{j,k})^{r^{(\ell)}_{j,k}}\), where distinct terms may have different degrees \(r^{(\ell)}_{j,k}\). This generalizes the classical setting: allowing multiple degrees per output increases expressiveness while preserving the invariance structure explored in this paper. The number of terms \(K_{\ell,j}\) is treated as a hyperparameter; for efficiency, one may choose \(K_{\ell,j}\) close to a known or estimated Waring rank, though larger values permit additional modeling flexibility.\\

\noindent\textbf{Example (two-layer quadratic composition).} Take \(d_{0}=3\), \(d_{1}=2\), \(d_{2}=1\) and \(\alpha_{1}=2\). Let the first polynomial \(P_{1}:\mathbb{R}^{3}\to\mathbb{R}^{2}\) have outputs
\[
  [P_{1}(x)]_{1} \;=\; \big((w^{(1)}_{1,1})^{\top}x + b^{(1)}_{1,1}\big)^{2} \;+\; \big((w^{(1)}_{1,2})^{\top}x + b^{(1)}_{1,2}\big),
\]
\[
  [P_{1}(x)]_{2} \;=\; \big((w^{(1)}_{2,1})^{\top}x + b^{(1)}_{2,1}\big)^{2} \;+\; \big((w^{(1)}_{2,2})^{\top}x + b^{(1)}_{2,2}\big),
\]
with \(w^{(1)}_{i,k}\in\mathbb{R}^{3}\), \(b^{(1)}_{i,k}\in\mathbb{R}\) for \(i\in\{1,2\},k\in\{1,2\}\). The monomial layer \(M_{1}\) has components \([M_{1}(z)]_{i}=(z_{i})_{+}^{2}\). Let the second polynomial \(P_{2}:\mathbb{R}^{2}\to\mathbb{R}\) be
\[
  P_{2}(z) \;=\; \big((w^{(2)}_{1,1})^{\top}z + b^{(2)}_{1,1}\big)^{2} \;+\; \big((w^{(2)}_{1,2})^{\top}z + b^{(2)}_{1,2}\big),
\]
with \(w^{(2)}_{1,k}\in\mathbb{R}^{2}\), \(b^{(2)}_{1,k}\in\mathbb{R}\) for \(k\in\{1,2\}\). The full map is
\[
  f_{\theta}(x) \;=\; P_{2}\!\big(M_{1}(P_{1}(x))\big).
\]
This matches the template with: for \(P_{1}\), \(K_{1,1}=K_{1,2}=2\) and degrees \((2,1)\) using \(\big(w^{(1)}_{1,1}, b^{(1)}_{1,1}\big), \big(w^{(1)}_{1,2}, b^{(1)}_{1,2}\big)\) for output 1 and \(\big(w^{(1)}_{2,1}, b^{(1)}_{2,1}\big), \big(w^{(1)}_{2,2}, b^{(1)}_{2,2}\big)\) for output 2; for \(P_{2}\), \(K_{2,1}=2\) with degrees \((2,1)\) using \(\big(w^{(2)}_{1,1}, b^{(2)}_{1,1}\big)\) and \(\big(w^{(2)}_{1,2}, b^{(2)}_{1,2}\big)\).\\

\noindent All free parameters are collected into \(\theta\) (the polynomial affine parameters \(\{w^{(\ell)}_{j,k},\,b^{(\ell)}_{j,k}\}_{\ell,j,k}\)). The layer-wise exponents \(\alpha_{\ell}\) are treated as fixed hyperparameters and are not included in \(\theta\). The central problem of this paper is to characterize the parametric invariances of this family:
\[
  \theta \sim \theta' \quad\Longleftrightarrow\quad f_{\theta}(x)=f_{\theta'}(x)\ \text{for all}\ x\in X.
\]
Equivalently, transformations \(T:\Theta\to\Theta\) (a group or semigroup of reparameterizations) are sought such that
\[
  f_{T(\theta)}\equiv f_{\theta}.
\]
Understanding this invariance relation determines identifiability, effective degrees of freedom, and canonical representatives for parsimonious modeling. Several papers in the past have looked into invariances in neural networks which are a special case of the present work \cite{entezari2021role, le2022training, simsek2021geometry, song2018accelerating, zhao2022symmetries}. \\

\noindent Three practical consequences are associated with parametric invariances in compositional models: parameter ranges can be minimized within equivalence classes, enabling tighter dynamic ranges and improved low‑precision discretization (the realized map is equivalent before discretization; after discretization the chosen representative typically incurs smaller quantization error at a fixed precision); obfuscation is permitted since parameters can be reparameterized and inputs can be transformed so that structure is hidden while observable behavior is preserved; and invariance-aware optimization with regularization: when the objective includes regularizers (e.g., $\ell_2$, norm-balancing, sparsity), a short subroutine can be inserted at each gradient step to reparameterize within the equivalence class and (approximately) minimize the regularization term while keeping the data-fit unchanged, which improves conditioning and can accelerate convergence.

\section{The Invariance Group}
\noindent Element-wise rectified monomials are equivariant to permutations and positive diagonal scalings in a simple way. Fix a layer \(\ell\) and define \(\sigma_{\alpha_{\ell}}:\mathbb{R}^{d_{\ell}}\to\mathbb{R}^{d_{\ell}}\) by \([\sigma_{\alpha_{\ell}}(z)]_{i}=(z_{i})_{+}^{\alpha_{\ell}}\). Let \(P\in\mathbb{R}^{d_{\ell}\times d_{\ell}}\) be a permutation matrix and \(D=\mathrm{diag}(d_{1},\dots,d_{d_{\ell}})\) a diagonal matrix with \(d_{i}>0\). Write \(D^{\alpha_{\ell}}:=\mathrm{diag}(d_{1}^{\alpha_{\ell}},\dots,d_{d_{\ell}}^{\alpha_{\ell}})\). Then, for all \(x\in\mathbb{R}^{d_{\ell}}\),
\[
  \sigma_{\alpha_{\ell}}(P D x)\;=\;P\, D^{\alpha_{\ell}}\, \sigma_{\alpha_{\ell}}(x).
\]
Thus, the group \(G_{\ell}:=\mathcal{S}_{d_{\ell}}\ltimes (\mathbb{R}_{>0}^{d_{\ell}},\cdot)\) acts on inputs by \(x\mapsto P D x\) and on outputs by \(y\mapsto P D^{\alpha_{\ell}} y\), under which \(\sigma_{\alpha_{\ell}}\) is equivariant.\\

\noindent This identity follows by a component-wise calculation: for index \(i\) with permutation \(p(i)\),
\[
\begin{aligned}
  [\sigma_{\alpha_{\ell}}(P D x)]_{i}
  &= \big( d_{p(i)}\, x_{p(i)} \big)_{+}^{\alpha_{\ell}}
   = d_{p(i)}^{\alpha_{\ell}}\, (x_{p(i)})_{+}^{\alpha_{\ell}} \\
  &= \big[P\, D^{\alpha_{\ell}}\, \sigma_{\alpha_{\ell}}(x)\big]_{i}.
\end{aligned}
\]
Consequences for the full composition follow by interleaving polynomial layers: permutations and positive diagonal scalings can be pushed through rectified monomial layers as \(D\mapsto D^{\alpha_{\ell}}\), and absorbed into adjacent polynomial maps as parameter reparameterizations. This underlies the diagonal/permutation invariances used elsewhere in the paper.\\

\noindent A complete layerwise invariance can be described starting from \(P_{1}\) upward. Let the overall map be
\[
  f_{\theta} \;=\; P_{L}\circ M_{L-1}\circ P_{L-1}\circ \cdots \circ M_{1}\circ P_{1}.
\]
Two operations generate an invariance family.
\begin{enumerate}
\item \textit{Input linear reparameterization}: For any invertible \(S_{0}\in \mathrm{GL}(d_{0})\), define a new first polynomial
\[
  \tilde P_{1}(z)\;:=\;P_{1}(S_{0}^{-1} z).
\]
Then for all \(x\), the jointly transformed pair satisfies
\[
  \tilde f_{\tilde\theta}(S_{0}x)\;=\;f_{\theta}(x),
\]
i.e., a linear change of coordinates at the input can be compensated by the inverse precomposition into \(P_{1}\).

\item \textit{Inter-layer permutation/diagonal reparameterization}: For each hidden interface \(\ell=1,\dots,L-1\), choose a permutation \(\Pi_{\ell}\in\mathcal{S}_{d_{\ell}}\) and a positive diagonal \(D_{\ell}=\mathrm{diag}(d_{\ell,1},\dots,d_{\ell,d_{\ell}})\). Define reparameterized polynomials
\[
\begin{aligned}
  \tilde P_{1} &:= (\Pi_{1} D_{1}^{\,1/\alpha_{1}})\circ P_{1},\\
  \tilde P_{\ell} &:= (\Pi_{\ell} D_{\ell}^{\,1/\alpha_{\ell}})\circ P_{\ell}\circ (\Pi_{\ell-1} D_{\ell-1})^{-1},\\[-2pt]
                 &\hspace{1.6em}\ell=2,\dots, L-1,\\
  \tilde P_{L} &:= P_{L}\circ (\Pi_{L-1} D_{L-1})^{-1},
\end{aligned}
\]
and keep all monomial layers unchanged, \(\tilde M_{\ell}=M_{\ell}\). Using the equivariance
\[
  M_{\ell}\!\big(\Pi_{\ell} D_{\ell}^{\,1/\alpha_{\ell}}\, z\big)\;=\;\Pi_{\ell} D_{\ell}\, M_{\ell}(z),
\]
one checks by cancellation that
\[
  \tilde f_{\tilde\theta}(x) \;=\; f_{\theta}(x)\quad \text{for all }x.
\]
\end{enumerate}
Thus inserting \(\Pi_{\ell} D_{\ell}^{\,1/\alpha_{\ell}}\) after \(P_{\ell}\) forces the post-\(M_{\ell}\) activation to be multiplied by \(\Pi_{\ell} D_{\ell}\); compensating by precomposing \(P_{\ell+1}\) with \((\Pi_{\ell} D_{\ell})^{-1}\) leaves the overall input–output map unchanged. Combined with the input reparameterization above, this yields a complete, layerwise description of the invariance group for the composed model.\\

\noindent To understand how the permutations and diagonal scalings get absorbed into the polynomial parameters, we adopt an ordered-parameter view: for each layer \(\ell\), collect its per‑output parameters into an ordered list
\[
  \phi^{(\ell)} \;=\; \big(\,\phi^{(\ell)}_{1},\,\dots,\,\phi^{(\ell)}_{d_{\ell}}\,\big),
  \quad
  \phi^{(\ell)}_{j} \;=\; \big\{\, (w^{(\ell)}_{j,k},\,b^{(\ell)}_{j,k},\,r^{(\ell)}_{j,k}) \,\big\}_{k=1}^{K_{\ell,j}},
\]
where the degrees \(r^{(\ell)}_{j,k}\) are hyperparameters that move with their element under permutations but are not numerically changed. Firstly, an input reparameterization \(x\mapsto S_{0}x\) (write \(A_{0}:=S_{0}^{-1}\)) sends
\[
(w^{(1)}_{j,k},\,b^{(1)}_{j,k})\mapsto (A_{0}^{\!\top} w^{(1)}_{j,k},\, b^{(1)}_{j,k})
\]
for all \(j,k\). Secondly, applying a post‑map at \(P_{\ell}\) by \(\Pi_{\ell} D_{\ell}^{\,1/\alpha_{\ell}}\) (permutation plus diagonal scaling) with \(s_{\ell}:=\mathrm{diag}(D_{\ell}^{\,1/\alpha_{\ell}})\) and \(j':=\Pi_{\ell}^{-1}(j)\) yields
\(
  \tilde\phi^{(\ell)}_{j} = \big\{\, (\tilde w^{(\ell)}_{j,k},\,\tilde b^{(\ell)}_{j,k},\,\tilde r^{(\ell)}_{j,k}) \,\big\}_{k=1}^{K_{\ell,j}},
\)
where,
\[
  \tilde w^{(\ell)}_{j,k} = s_{\ell,j}\, w^{(\ell)}_{j',k},\quad
  \tilde b^{(\ell)}_{j,k} = s_{\ell,j}\, b^{(\ell)}_{j',k},\quad
  \tilde r^{(\ell)}_{j,k} = r^{(\ell)}_{j',k}.
\]
In other words, the list is reordered by \(\Pi_{\ell}\) and each output coordinate is scaled by \(s_{\ell,j}\). Finally, precomposing \(P_{\ell+1}\) by \((\Pi_{\ell} D_{\ell})^{-1}\) (write \(B_{\ell}:=(\Pi_{\ell} D_{\ell})^{-1}\)) updates every incoming affine form as
\[
  (w^{(\ell+1)}_{j,k},\,b^{(\ell+1)}_{j,k}) \;\mapsto\; (B_{\ell}^{\!\top} w^{(\ell+1)}_{j,k},\, b^{(\ell+1)}_{j,k}),
\]
and the ordered list \(\phi^{(\ell+1)}\) is relabeled by the same \(\Pi_{\ell}\) so that interface indices remain aligned. Degrees again only permute with their element.\\

\noindent In summary, at each layer, the ordered list is first permuted and diagonally scaled at the output of \(P_{\ell}\) via \(\Pi_{\ell} D_{\ell}^{\,1/\alpha_{\ell}}\); the subsequent layer \(P_{\ell+1}\) is precomposed by \(B_{\ell}=(\Pi_{\ell} D_{\ell})^{-1}\) and its ordered list is permuted by the same \(\Pi_{\ell}\). Input reparameterization \(x\mapsto S_{0}x\) induces the left‑multiplication \(A_{0}^{\!\top}\) of \(w\) in \(P_{1}\). Throughout, degrees \(r\) are preserved (only reordered).\\

\noindent\textbf{Continuing the example (explicit reparameterization).} Keep \(d_{0}=3\), \(d_{1}=2\), \(d_{2}=1\), \(\alpha_{1}=2\). Choose:
- an input change \(S_{0}\in \mathrm{GL}(3)\) and let \(A_{0}:=S_{0}^{-1}\);
- a permutation \(\Pi_{1}=\begin{bmatrix}0&1\\[2pt]1&0\end{bmatrix}\) (swap the two hidden coordinates);
- a positive diagonal \(D_{1}=\mathrm{diag}(d_{1},d_{2})\) with \(d_{1},d_{2}>0\), and set \(s_{1}:=D_{1}^{1/\alpha_{1}}=\mathrm{diag}(\sqrt{d_{1}},\sqrt{d_{2}})\). Apply the three updates to the example parameters.\\

\noindent 1) Input reparameterization into \(P_{1}\) (left-multiply each weight by \(A_{0}^{\!\top}\); biases unchanged):
\[
\begin{aligned}
  &w^{(1)}_{1,1}\leftarrow A_{0}^{\!\top} w^{(1)}_{1,1},\quad w^{(1)}_{1,2}\leftarrow A_{0}^{\!\top} w^{(1)}_{1,2},\\
  &w^{(1)}_{2,1}\leftarrow A_{0}^{\!\top} w^{(1)}_{2,1},\quad w^{(1)}_{2,2}\leftarrow A_{0}^{\!\top} w^{(1)}_{2,2}.
\end{aligned}
\]\\

\noindent 2) Post‑map at \(P_{1}\) by \(\Pi_{1} s_{1}\) (permute outputs and scale with \(\sqrt{d_{1}},\sqrt{d_{2}}\)). The new output-1 parameters come from old output-2 scaled by \(\sqrt{d_{1}}\); new output-2 from old output-1 scaled by \(\sqrt{d_{2}}\):
\[
\begin{aligned}
  &\tilde w^{(1)}_{1,1}=\sqrt{d_{1}}\; w^{(1)}_{2,1},\quad \tilde b^{(1)}_{1,1}=\sqrt{d_{1}}\; b^{(1)}_{2,1},\\
  &\tilde w^{(1)}_{1,2}=\sqrt{d_{1}}\; w^{(1)}_{2,2},\quad \tilde b^{(1)}_{1,2}=\sqrt{d_{1}}\; b^{(1)}_{2,2},\\[4pt]
  &\tilde w^{(1)}_{2,1}=\sqrt{d_{2}}\; w^{(1)}_{1,1},\quad \tilde b^{(1)}_{2,1}=\sqrt{d_{2}}\; b^{(1)}_{1,1},\\
  &\tilde w^{(1)}_{2,2}=\sqrt{d_{2}}\; w^{(1)}_{1,2},\quad \tilde b^{(1)}_{2,2}=\sqrt{d_{2}}\; b^{(1)}_{1,2}.
\end{aligned}
\]

3) Precompose \(P_{2}\) by \(B_{1}:=(\Pi_{1} D_{1})^{-1}\). Since \(B_{1}^{\!\top}=\Pi_{1} D_{1}^{-1}\), for each \(k\in\{1,2\}\) write \(w^{(2)}_{1,k}=\begin{bmatrix}w^{(2)}_{1,k,1}\\ w^{(2)}_{1,k,2}\end{bmatrix}\). Then
\[
  \tilde w^{(2)}_{1,k} \;=\; \Pi_{1} D_{1}^{-1}\, w^{(2)}_{1,k}
  \;=\;
  \begin{bmatrix}
    \;\dfrac{w^{(2)}_{1,k,2}}{d_{2}}\;\\[6pt]
    \;\dfrac{w^{(2)}_{1,k,1}}{d_{1}}\;
  \end{bmatrix},
  \qquad
  \tilde b^{(2)}_{1,k} \;=\; b^{(2)}_{1,k}.
\]
The monomial layer \(M_{1}\) is unchanged. With these updates, the reparameterized model \(\tilde f\) satisfies \(\tilde f(x)=f(x)\) for all \(x\).

\section{Invariance Aware Optimization }

\noindent\textbf{Step 1 (first-layer Frobenius cost only; fix \(S_{0}=I\)).}
Let the first-layer regularizer be
\[
  \mathcal{R}^{(1)}_{\mathrm{F}}\;=\;\sum_{j=1}^{d_{1}}\sum_{k=1}^{K_{1,j}}\Big(\|w^{(1)}_{j,k}\|_{2}^{2}+\mu\,(b^{(1)}_{j,k})^{2}\Big),\quad \mu\ge 0.
\]
In this step we fix \(S_{0}=I\) (so \(A_{0}:=S_{0}^{-1}=I\)) and do not optimize over it. We only apply an output scaling at layer 1 by \(D_{1}=\mathrm{diag}(d_{1,1},\dots,d_{1,d_{1}})\) via \(s_{1}:=D_{1}^{1/\alpha_{1}}\). For each output \(j\) and term \(k\) with degree \(r^{(1)}_{j,k}\),
\[
  w^{(1)}_{j,k}\ \mapsto\ s_{1,j}^{\,1/r^{(1)}_{j,k}}\, w^{(1)}_{j,k},\qquad
  b^{(1)}_{j,k}\ \mapsto\ s_{1,j}^{\,1/r^{(1)}_{j,k}}\, b^{(1)}_{j,k}.
\]
Therefore the transformed first-layer cost is
\[
\begin{aligned}
  \mathcal{R}^{(1)}_{\mathrm{F}}(D_{1})
  &= \sum_{j=1}^{d_{1}}\sum_{k=1}^{K_{1,j}}
     \Big(\,\|w^{(1)}_{j,k}\|_{2}^{2}+\mu\,(b^{(1)}_{j,k})^{2}\,\Big) s_{1,j}^{\,2/r^{(1)}_{j,k}}\\
  &= \sum_{j=1}^{d_{1}}\sum_{k=1}^{K_{1,j}}
     \underbrace{\Big(\,\|w^{(1)}_{j,k}\|_{2}^{2}+\mu\,(b^{(1)}_{j,k})^{2}\,\Big)}_{=:a^{(1)}_{j,k}}\;
     d_{1,j}^{\,\gamma^{(1)}_{j,k}},
\end{aligned}
\]
with exponents \(\gamma^{(1)}_{j,k}:=\tfrac{2}{\alpha_{1} r^{(1)}_{j,k}}\). Permutations at layer 1 do not affect this isolated term (they only relabel outputs). We will add subsequent interfaces in later steps.\\

\noindent\textbf{Step 2 (layers \(2\) to \(L\!-\!1\): layer-wise Frobenius costs; fix \(S_{0}=I\)).}
For each interior polynomial layer \(\ell=2,\dots,L-1\), let \(D_{\ell-1}=\mathrm{diag}(d_{\ell-1,1},\dots,d_{\ell-1,d_{\ell-1}})\), \(D_{\ell}=\mathrm{diag}(d_{\ell,1},\dots,d_{\ell,d_{\ell}})\), \(s_{\ell}:=D_{\ell}^{1/\alpha_{\ell}}\), and \(\Pi_{\ell}\in\mathcal{S}_{d_{\ell}}\). Under the invariance,
- inputs of \(P_{\ell}\) are scaled by \(D_{\ell-1}^{-1}\) (permutation does not affect Euclidean norms),
- outputs of \(P_{\ell}\) are permuted by \(\Pi_{\ell}\) and scaled componentwise by \(s_{\ell}^{\,1/r^{(\ell)}}\).

Define, for each output and term,
\[
  a^{(\ell)}_{j,k}(D_{\ell-1})\;:=\;\big\|D_{\ell-1}^{-1} w^{(\ell)}_{j,k}\big\|_{2}^{2}\;+\;\mu\,\big(b^{(\ell)}_{j,k}\big)^{2},~
  \gamma^{(\ell)}_{j,k}\;:=\;\frac{2}{\alpha_{\ell}\, r^{(\ell)}_{j,k}}.
\]
Then the Frobenius contribution of layer \(\ell\) after \((D_{\ell-1},\Pi_{\ell},D_{\ell})\) is
\[
  \mathcal{R}^{(\ell)}_{\mathrm{F}}(D_{\ell-1},\Pi_{\ell},D_{\ell})
  \;=\;
  \sum_{i=1}^{d_{\ell}}\;\sum_{k=1}^{K_{\ell,\Pi_{\ell}(i)}}
  a^{(\ell)}_{\Pi_{\ell}(i),k}(D_{\ell-1})\;\; d_{\ell,i}^{\,\gamma^{(\ell)}_{\Pi_{\ell}(i),k}}.
\]
Summing over \(\ell=2,\dots,L-1\) gives the step‑2 cost
\[
  \mathcal{R}^{(2{:}L\!-\!1)}_{\mathrm{F}}\big(\{D_{\ell-1},\Pi_{\ell},D_{\ell}\}_{\ell=2}^{L-1}\big)
  \;=\;\sum_{\ell=2}^{L-1}\mathcal{R}^{(\ell)}_{\mathrm{F}}(D_{\ell-1},\Pi_{\ell},D_{\ell}).
\]
We now move on to the determining the cost for the last layer.\\

\noindent\textbf{Step 3 (last polynomial layer \(P_{L}\): Frobenius cost).}
At the final interface, only the precomposition by \(B_{L-1}=(\Pi_{L-1}D_{L-1})^{-1}\) acts on \(P_{L}\); there is no post-scaling. Since \(\Pi_{L-1}\) is orthogonal, the Euclidean norm depends only on \(D_{L-1}\). Define
\[
  C^{(L-1)}_{i}\;:=\;\sum_{j=1}^{d_{L}}\sum_{k=1}^{K_{L,j}}\big(w^{(L)}_{j,k}[i]\big)^{2},\qquad i=1,\dots,d_{L-1}.
\]
Then
\[
\begin{aligned}
  \mathcal{R}^{(L)}_{\mathrm{F}}(D_{L-1})
  &= \sum_{j=1}^{d_{L}}\sum_{k=1}^{K_{L,j}}\Big(\,\|D_{L-1}^{-1} w^{(L)}_{j,k}\|_{2}^{2}+\mu\,(b^{(L)}_{j,k})^{2}\,\Big)\\
  &= \sum_{i=1}^{d_{L-1}} C^{(L-1)}_{i}\, d_{L-1,i}^{-2}\;+\;\mu\sum_{j,k}(b^{(L)}_{j,k})^{2},
\end{aligned}
\]
where the bias term is invariant to \(D_{L-1}\) and can be treated as a constant for optimization over \(D_{L-1}\). No \(D_{L}\) or \(\Pi_{L-1}\) dependence remains in this layer’s cost.\\

\noindent\textbf{Step 4 (Geometric program for regularizer minimization; fix \(\{\Pi_{\ell}\}\)).}
Decision variables: positive diagonals \(D_{\ell}=\mathrm{diag}(d_{\ell,1},\dots,d_{\ell,d_{\ell}})\) for \(\ell=1,\dots,L-1\). With fixed permutations \(\{\Pi_{\ell}\}\), the total Frobenius regularizer
\[
\begin{aligned}
\mathcal{R}_{\mathrm{F}}^{\mathrm{tot}}(\{D_{\ell}\})
&= \underbrace{\mathcal{R}^{(1)}_{\mathrm{F}}(D_{1})}_{\text{Step 1}}
+ \underbrace{\sum_{\ell=2}^{L-1}\mathcal{R}^{(\ell)}_{\mathrm{F}}(D_{\ell-1},\Pi_{\ell},D_{\ell})}_{\text{Step 2}}\\
&\quad+ \underbrace{\mathcal{R}^{(L)}_{\mathrm{F}}(D_{L-1})}_{\text{Step 3}}
\end{aligned}
\]
is a posynomial in \(\{d_{\ell,i}\}\). Concretely, using the per-layer components:\\
- First layer (Step 1):
\[
\begin{aligned}
\mathcal{R}^{(1)}_{\mathrm{F}}(D_{1})
&= \sum_{j=1}^{d_{1}}\sum_{k=1}^{K_{1,j}} a^{(1)}_{j,k}\, d_{1,j}^{\,\gamma^{(1)}_{j,k}},\\
a^{(1)}_{j,k}&:= \|w^{(1)}_{j,k}\|_{2}^{2}+\mu(b^{(1)}_{j,k})^{2},\\
\gamma^{(1)}_{j,k}&:= \tfrac{2}{\alpha_{1}\, r^{(1)}_{j,k}}.
\end{aligned}
\]
- Interior layers (Step 2), writing \(j=\Pi_{\ell}(i)\), $\mathcal{R}^{(\ell)}_{\mathrm{F}}(D_{\ell-1},\Pi_{\ell},D_{\ell})$ can be written as
\[
\begin{aligned}
  & \sum_{i=1}^{d_{\ell}}\sum_{k=1}^{K_{\ell,j}}
     \Big(\underbrace{\sum_{p=1}^{d_{\ell-1}} (w^{(\ell)}_{j,k}[p])^{2}\, d_{\ell-1,p}^{-2}
     \;+\;\mu(b^{(\ell)}_{j,k})^{2}d_{\ell,i}^{\,\gamma^{(\ell)}_{j,k}}}_{\text{posynomial in }D_{\ell-1}^{-1}}\Big)\;
      \\
  &\text{with }\ \gamma^{(\ell)}_{j,k}:=\tfrac{2}{\alpha_{\ell} r^{(\ell)}_{j,k}},\quad
  j:=\Pi_{\ell}(i),\quad \ell=2,\dots,L-1.
\end{aligned}
\]
- Last layer (Step 3):
\[
\begin{aligned}
  \mathcal{R}^{(L)}_{\mathrm{F}}(D_{L-1})
  &= \sum_{i=1}^{d_{L-1}} C^{(L-1)}_{i}\, d_{L-1,i}^{-2}
   + \underbrace{\mu\sum_{j,k}(b^{(L)}_{j,k})^{2}}_{\text{constant}},\\
C^{(L-1)}_{i}&:= \sum_{j,k}(w^{(L)}_{j,k}[i])^{2}.
\end{aligned}
\]
Hence, the invariance-aware regularizer minimization is the geometric programm given by:
\[
\begin{aligned}
  \min_{\{d_{\ell,i}>0\}} \quad & \mathcal{R}_{\mathrm{F}}^{\mathrm{tot}}(\{D_{\ell}\}) \\
  \text{s.t.}\quad & \prod_{i=1}^{d_{\ell}} d_{\ell,i}=1,\quad \ell=1,\dots,L-1\quad\text{(optional anchors)},\\
                   & d_{\min}\ \le\ d_{\ell,i}\ \le\ d_{\max}\quad\text{(optional bounds)}.
\end{aligned}
\]
Anchors are monomial equalities and bounds are posynomial/monomial constraints; both preserve GP structure. After the standard log change \(t_{\ell,i}=\log d_{\ell,i}\), the problem becomes convex and can be solved by any standard convex program solver. Discrete permutations \(\{\Pi_{\ell}\}\) can be fixed a priori or updated by an outer assignment step (cf. Step 2); for fixed \(\{\Pi_{\ell}\}\) the continuous subproblem above is a convex GP.\\

With symmetric anchors/bounds (coordinate-wise identical), the optimal value of the GP is independent of the permutations:
\[
\begin{aligned}
\min_{\{d_{\ell,i}>0\}}~ \mathcal{R}_{\mathrm{F}}^{\mathrm{tot}}\!\big(\{D_{\ell}\},\{\Pi_{\ell}\}\big)
&= \min_{\{d_{\ell,i}>0\}}~ \mathcal{R}_{\mathrm{F}}^{\mathrm{tot}}\!\big(\{D_{\ell}\},\{I\}\big).
\end{aligned}
\]
Note again, that at each layer, changing \(\Pi_{\ell}\) only reindexes which diagonal entry \(d_{\ell,i}\) multiplies a given term; since \(\{d_{\ell,i}\}\) are free variables with symmetric constraints, a corresponding relabeling of \(\{d_{\ell,i}\}\) attains the same objective value.\\

\noindent\emph{L1 regularization}: Replacing Frobenius terms by 1-norms yields the same GP structure. Define \(\eta^{(\ell)}_{j,k}:=\tfrac{1}{\alpha_{\ell}\,r^{(\ell)}_{j,k}}\).
- First layer:
\[
\begin{aligned}
\mathcal{R}^{(1)}_{1}(D_{1})
&= \sum_{j=1}^{d_{1}}\sum_{k=1}^{K_{1,j}}
   \big(\|w^{(1)}_{j,k}\|_{1}+\mu\,|b^{(1)}_{j,k}|\big)\, d_{1,j}^{\,\eta^{(1)}_{j,k}}.
\end{aligned}
\]
- Interior layers (\(\ell=2,\dots,L-1\)), with \(j:=\Pi_{\ell}(i)\):
\[
\begin{aligned}
\mathcal{R}^{(\ell)}_{1}(D_{\ell-1},\Pi_{\ell},D_{\ell})
&= \sum_{i=1}^{d_{\ell}}\sum_{k=1}^{K_{\ell,j}}
   \Big(\sum_{p=1}^{d_{\ell-1}} |w^{(\ell)}_{j,k}[p]|\, d_{\ell-1,p}^{-1}
        + \mu\,|b^{(\ell)}_{j,k}|\Big)\, d_{\ell,i}^{\,\eta^{(\ell)}_{j,k}}.
\end{aligned}
\]
- Last layer:
\[
\begin{aligned}
\mathcal{R}^{(L)}_{1}(D_{L-1})
&= \sum_{i=1}^{d_{L-1}} C^{(L-1)}_{1,i}\, d_{L-1,i}^{-1}
   + \mu\sum_{j,k}|b^{(L)}_{j,k}|,\\
C^{(L-1)}_{1,i}&:= \sum_{j=1}^{d_{L}}\sum_{k=1}^{K_{L,j}} |w^{(L)}_{j,k}[i]|.
\end{aligned}
\]
Each term is a posynomial in \(\{d_{\ell,i}\}\); the constraints are monomial/posynomial. After the log change, the L1-regularized problem remains a convex GP. The permutation-independence argument applies verbatim.

\section{Parameter Obfuscation}

\noindent Consider two parties, Alice and Bob. Alice holds an input \(x\in\mathbb{R}^{d_{0}}\) and Bob holds the model parameters \(\theta\) defining \(f_{\theta}\). The goal is to compute \(f_{\theta}(x)\) while ensuring that Bob does not learn \(x\) and Alice does not learn \(\theta\).\\

\noindent A simple way for Alice to protect her input is to apply a random invertible matrix \(R\in \mathrm{GL}(d_{0})\) known only to her and reveal only the obfuscated vector
\[
  \tilde x \;=\; R\,x,
\]
and to any external party (including Bob). Access is thus restricted to \(R x\), never to \(x\) itself. The choice of \(R\) can be, e.g., a random orthogonal or well‑conditioned Gaussian matrix to preserve numerical stability. Further protocol steps (for Bob to act on \(\tilde x\) without revealing \(\theta\)) will build on this transform.

\noindent On Bob’s side, only a session‑specific representative from the invariance class of \(\theta\) needs to be shared. Using the layerwise equivalence (input reparameterization and inter‑layer \(\Pi_{\ell},D_{\ell}\)), Bob can publish parameters \(\hat\theta\) that implement exactly the same map as \(f_{\theta}\), while absorbing Alice’s obfuscation \(R\) into the first layer via \(P_{1}\!\mapsto\! P_{1}\circ R^{-1}\). Evaluation on \(\tilde x=R x\) then reproduces \(f_{\theta}(x)\) without disclosing the original \(\theta\).\\

\noindent To prevent Alice from caching a reusable parameter copy, Bob can add a secret, per‑session polarity mask at each hidden interface. Define a diagonal mask \(S_{\ell}=\mathrm{diag}(s_{\ell,1},\dots,s_{\ell,d_{\ell}})\) with \(s_{\ell,i}\in\{\pm 1\}\), and use the identity
\[
  (x)_{+}^{\alpha}\;=\;(-x)_{-}^{\alpha},\qquad (x)_{-}:=(-x)_{+}.
\]
Equivalently, write a polarity‑modulated activation
\[
  [M_{\ell}^{(S_{\ell})}(z)]_{i}\;=\;(s_{\ell,i}\,z_{i})_{+}^{\alpha_{\ell}},
\]
and absorb \(S_{\ell}\) into adjacent polynomials using the same push‑through rules as for \(D_{\ell}\): insert \(S_{\ell}^{\,1/\alpha_{\ell}}\) after \(P_{\ell}\) and precompose \(P_{\ell+1}\) by \((\Pi_{\ell}\,S_{\ell}\,D_{\ell})^{-1}\). This keeps the realized map unchanged while the session‑specific signs \(\{S_{\ell}\}\) remain hidden; a parameter set cached without the current polarity masks will not be valid for future sessions.\\

\noindent The orchestration proceeds as follows.
1) Bob samples per‑layer transformations \(\{\Pi^{\mathrm{B}}_{\ell},D^{\mathrm{B}}_{\ell},S^{\mathrm{B}}_{\ell}\}_{\ell=1}^{L-1}\) and sets \(S^{\mathrm{B}}_{0}=I\). He forms a session‑specific equivalent parameter set \(\hat\theta\) by applying the invariance map generated by these choices to \(\theta\), then sends \(\hat\theta\) to Alice.
2) Alice samples a random \(R\in\mathrm{GL}(d_{0})\) and sets \(S^{\mathrm{A}}_{0}:=R\). She also samples her own \(\{\Pi^{\mathrm{A}}_{\ell},D^{\mathrm{A}}_{\ell},S^{\mathrm{A}}_{\ell}\}_{\ell=1}^{L-1}\) and applies the invariance map to obtain \(\tilde\theta\) from \(\hat\theta\). She returns \(\tilde\theta\) to Bob and, for each query, sends the obfuscated input \(\tilde x:=R x\).
3) Bob evaluates \(y=f_{\tilde\theta}(\tilde x)\) and returns \(y\) to Alice. By equivariance and the composition of invariances, \(f_{\tilde\theta}(R x)=f_{\theta}(x)\) for all \(x\).\\

\noindent Why Bob cannot recover \(R\). From Bob’s perspective, the pair \((\hat\theta,\tilde\theta)\) reveals only that there exists some invariance transform composed of Alice’s unknown \((R,\{\Pi^{\mathrm{A}}_{\ell},D^{\mathrm{A}}_{\ell},S^{\mathrm{A}}_{\ell}\})\) such that \(\tilde\theta=T_{\mathrm{A}}(\hat\theta)\). The factorization of \(T_{\mathrm{A}}\) into the first‑layer input map \(R\) and the inter‑layer permutations/diagonals/polarity masks is non‑identifiable: many tuples \((R',\{\Pi'_{\ell},D'_{\ell},S'_{\ell}\})\) produce the same \(\tilde\theta\). In particular, right‑multiplying \(R\) by any permutation/positive diagonal (and compensating in adjacent layers) or flipping hidden signs yields the same published \(\tilde\theta\). Thus \(R\) is only identifiable up to a large invariance group known to Alice but hidden from Bob. Resampling \(R\) per session further prevents linkage across sessions; Bob only ever sees \(\tilde x=R x\) and \(\tilde\theta\), never \(x\) or \(\theta\).

\section{Parameter Range Minimization}
\noindent The goal is to select layerwise positive diagonal reparameterizations (the same family used in the invariance group) so that, after reparameterization, the numeric range of the parameters at each layer is as small as possible. Minimizing parameter ranges improves quantization behavior and reduces the dynamic range required for representation and communication.

\subsection{Setup and Notation}
Fix an interior interface $\ell\in\{1,\dots,L-1\}$. As before, let
\[
D_{\ell}=\mathrm{diag}(d_{\ell,1},\dots,d_{\ell,d_{\ell}}),\qquad d_{\ell,i}>0,
\]
and define
\[
s_{\ell,j}:=d_{\ell,j}^{\,1/\alpha_{\ell}},\qquad j=1,\dots,d_{\ell}.
\]
For layer $\ell$, consider an output $j$ and one of its polynomial terms indexed by $k$ with degree $r^{(\ell)}_{j,k}$. The affine parameters of that term are
\[
(w^{(\ell)}_{j,k}[p])_{p=1}^{d_{\ell-1}} \quad \text{(weights)}, \qquad b^{(\ell)}_{j,k}\quad \text{(bias)}.
\]
Under the invariance reparameterization that (i) post-scales outputs of $P_{\ell}$ by $\Pi_{\ell}D_{\ell}^{1/\alpha_{\ell}}$ and (ii) precomposes $P_{\ell+1}$ by $(\Pi_{\ell}D_{\ell})^{-1}$, each scalar parameter is multiplied by a positive scalar. After fixing permutations $\{\Pi_{\ell}\}$ and using only diagonals, the $(p)$-th entry of the transformed weight is
\[
\widetilde w^{(\ell)}_{j,k}[p]
= w^{(\ell)}_{j,k}[p]\; d_{\ell-1,p}^{-1}\; d_{\ell,j}^{\,\gamma^{(\ell)}_{j,k}},
\qquad
\gamma^{(\ell)}_{j,k} := \frac{1}{\alpha_{\ell}\, r^{(\ell)}_{j,k}},
\]
while the transformed bias is
\[
\widetilde b^{(\ell)}_{j,k}
= b^{(\ell)}_{j,k}\; d_{\ell,j}^{\,\gamma^{(\ell)}_{j,k}}.
\]
Signs are preserved because the diagonals are strictly positive.

\subsection{Positive/Negative Partition and Layerwise Bounds}
For each layer $\ell$, collect all scalar parameters (entries of all $w^{(\ell)}_{j,k}$ and $b^{(\ell)}_{j,k}$) and partition them by sign:
\[
\mathcal{P}^{(\ell)} := \{a : a \ge 0\}, \qquad
\mathcal{N}^{(\ell)} := \{a : a < 0\}.
\]
Zero entries may be placed in $\mathcal{P}^{(\ell)}$ or handled separately using a small tolerance. After applying the diagonal scalings $\{D_{\ell-1},D_{\ell}\}$, transformed positive parameters remain nonnegative and transformed negative parameters remain nonpositive. Introduce per-layer nonnegative bounds
\begin{equation} \nonumber
s^{(\ell)}_{p}\ge 0 \quad \text{(upper bound on transformed positives)}, \end{equation}
and
\begin{equation} \nonumber
s^{(\ell)}_{n}\ge 0 \quad \text{(upper bound on minus transformed negatives)},
\end{equation}
defined by
\[
\begin{aligned}
&\forall\, a\in\mathcal{P}^{(\ell)}:\quad a_{\text{transformed}} \le s^{(\ell)}_{p},\\
&\forall\, a\in\mathcal{N}^{(\ell)}:\quad -a_{\text{transformed}} \le s^{(\ell)}_{n}.
\end{aligned}
\]
Hence $s^{(\ell)}_{p}+s^{(\ell)}_{n}$ upper bounds the total numerical span of parameters in layer $\ell$.

\subsection{Optimization Problem}
We now seek diagonal scalings $\{D_{\ell}\}_{\ell=1}^{L-1}$ and bounds $\{s^{(\ell)}_{p},s^{(\ell)}_{n}\}$ that minimize the worst per-layer range. Introducing a global scalar $t>0$, we pose
\[
\begin{aligned}
\min_{\substack{\{d_{\ell,i}>0\}\\ \{s^{(\ell)}_{p},s^{(\ell)}_{n}\ge 0\}\\ t>0}} \quad & t \\[3pt]
\text{s.t.}\quad
& s^{(\ell)}_{p}+s^{(\ell)}_{n} \le t,\qquad \ell=1,\dots,L-1,\\[3pt]
& s^{(\ell)}_{p} \ge a \cdot \Phi_{a}(\{d_{\ell-1,p}\},d_{\ell,j}), \quad a\in\mathcal{P}^{(\ell)},\\
& s^{(\ell)}_{n} \ge (-a) \cdot \Phi_{a}(\{d_{\ell-1,p}\},d_{\ell,j}), \quad a\in\mathcal{N}^{(\ell)},\\[3pt]
& \prod_{i=1}^{d_{\ell}} d_{\ell,i}=1,\qquad
d_{\min} \le d_{\ell,i} \le d_{\max}\quad \text{(optional)}.
\end{aligned}
\]
Here $\Phi_{a}(\cdot)$ is the multiplicative factor by which the scalar parameter $a$ is scaled under the invariance transformation. For a weight entry $w^{(\ell)}_{j,k}[p]$,
\[
\Phi_{w^{(\ell)}_{j,k}[p]}(\{d_{\ell-1}\},d_{\ell,j})
= d_{\ell-1,p}^{-1}\; d_{\ell,j}^{\,\gamma^{(\ell)}_{j,k}},
\]
and for a bias $b^{(\ell)}_{j,k}$,
\[
\Phi_{b^{(\ell)}_{j,k}}(d_{\ell,j})
= d_{\ell,j}^{\,\gamma^{(\ell)}_{j,k}}.
\]

\subsection{Convexity and Geometric-Program Structure}
All inequalities above are monomial or posynomial relations between positive variables. After rearrangement, each takes the form of a posynomial upper-bound inequality, hence the overall problem is a \emph{geometric program} (GP). With the logarithmic change of variables
\[
u_{\ell,i}=\log d_{\ell,i},\quad
\tilde s^{(\ell)}_{p}=\log s^{(\ell)}_{p},\quad
\tilde s^{(\ell)}_{n}=\log s^{(\ell)}_{n},\quad
\tilde t=\log t,
\]
the GP becomes a convex optimization problem solvable by standard GP solvers. Optional anchors $\prod_i d_{\ell,i}=1$ and bounds on $d_{\ell,i}$ preserve convexity.

\subsection{Remarks}
\begin{itemize}
  \item \textbf{Permutations.} As before, permutations $\{\Pi_{\ell}\}$ only relabel which diagonal entries multiply given coefficients. If $\{\Pi_{\ell}\}$ are not fixed, one may alternate between (i) solving the GP for fixed $\{\Pi_{\ell}\}$ and (ii) reassigning via an outer permutation update.
  \item \textbf{Zeros and Stability.} Zero coefficients lead to trivial constraints. To ensure numerical stability, maintain $d_{\min}>0$ and optionally clip very large/small $d_{\ell,i}$.
  \item \textbf{Polarity Masks.} If session-specific sign masks $S_{\ell}$ are used (as in the obfuscation protocol), the partition into $\mathcal{P}^{(\ell)}$ and $\mathcal{N}^{(\ell)}$ should reflect the masked parameters. The GP structure is unaffected.
  \item \textbf{Objective Interpretation.} The objective $t$ minimizes the maximum (across layers) of the per-layer sum $s^{(\ell)}_{p}+s^{(\ell)}_{n}$, i.e., the largest parameter span. Weighted or additive variants can be handled analogously.
  \item \textbf{Implementation.} The GP can be implemented directly in CVXPY, CVX, or GPkit. During gradient-based training, the reparameterization can be recomputed periodically (every few iterations) and cached for efficiency.
\end{itemize}

\noindent This formulation provides a principled, invariance-aware method to determine diagonal scalings that minimize layerwise parameter ranges, thereby improving numerical robustness and compactness without altering the realized input–output map.

\section{Obfuscating Remote Training}
\noindent Consider the scenario where a party holds a private dataset and wishes to train a neural network on a remote computational resource (e.g., a cloud server) without revealing either the training data or the final model parameters to the remote party. Using the invariance structure developed in earlier sections, a protocol is constructed that achieves both data and model obfuscation for multilayer perceptrons (MLPs) in classification and regression settings.

\subsection{Problem Setup}
Let the data holder possess a training set $\{(x^{(n)}, y^{(n)})\}_{n=1}^{N}$ with inputs $x^{(n)}\in\mathbb{R}^{d_{0}}$ and labels $y^{(n)}$ (class indices for classification, or vectors for regression). The goal is to train an $L$-layer MLP
\[
  f_{\theta}(x) \;=\; W_{L}\,\sigma_{L-1}(W_{L-1}\cdots\sigma_{1}(W_{1}x + b_{1})\cdots + b_{L-1}) + b_{L},
\]
where $W_{\ell}\in\mathbb{R}^{d_{\ell}\times d_{\ell-1}}$, $b_{\ell}\in\mathbb{R}^{d_{\ell}}$, and $\sigma_{\ell}$ denotes a componentwise activation (e.g., ReLU). The remote server will perform the training iterations, but neither the original data nor the true model parameters should be recoverable by the server.

\subsection{Input Obfuscation}
The data holder samples a secret invertible matrix $R\in\mathrm{GL}(d_{0})$ and transforms all training inputs:
\[
  \tilde x^{(n)} \;:=\; R\, x^{(n)}, \qquad n=1,\dots,N.
\]
The matrix $R$ may be chosen as a random orthogonal matrix (to preserve norms) or a well-conditioned Gaussian matrix. Only the transformed dataset $\{(\tilde x^{(n)}, \tilde y^{(n)})\}$ is transmitted to the server; the original inputs $\{x^{(n)}\}$ and the matrix $R$ are never disclosed.

\subsection{First-Layer Compensation}
To ensure that training on the transformed inputs yields equivalent gradients and loss values, the first-layer weight matrix is pre-multiplied by $R^{-1}$. Concretely, initialize $W_{1}$ with random values and define
\[
  \tilde W_{1} \;:=\; W_{1}\, R^{-1}.
\]
Then for any input,
\[
  \tilde W_{1}\,\tilde x \;=\; W_{1}\, R^{-1}\, R\, x \;=\; W_{1}\, x,
\]
so the pre-activation at the first hidden layer is identical whether one uses $(W_{1}, x)$ or $(\tilde W_{1}, \tilde x)$. The data holder sends $\tilde W_{1}$ (not $W_{1}$) to the server.

\subsection{Hidden-Layer Obfuscation via Permutation and Diagonal Scaling}
For each hidden interface $\ell=1,\dots,L-1$, the data holder samples:
\begin{itemize}
  \item a random permutation matrix $\Pi_{\ell}\in\mathcal{S}_{d_{\ell}}$,
  \item a random positive diagonal matrix $D_{\ell}=\mathrm{diag}(d_{\ell,1},\dots,d_{\ell,d_{\ell}})$ with $d_{\ell,i}>0$.
\end{itemize}
Using the equivariance identity for ReLU-type activations,
\[
  \sigma(\Pi_{\ell}\,D_{\ell}\,z) \;=\; \Pi_{\ell}\,D_{\ell}\,\sigma(z),
\]
(since permutations and positive scalings commute with componentwise nonnegative activations), the weight matrices are transformed as follows. Define $T_{\ell}:=\Pi_{\ell}D_{\ell}$ and set
\[
\begin{aligned}
  \tilde W_{1} &:= T_{1}\, W_{1}\, R^{-1}, \\
  \tilde W_{\ell} &:= T_{\ell}\, W_{\ell}\, T_{\ell-1}^{-1}, \quad \ell=2,\dots,L-1, \\
  \tilde W_{L} &:= W_{L}\, T_{L-1}^{-1}.
\end{aligned}
\]
Biases are similarly transformed: $\tilde b_{\ell}:=T_{\ell}\,b_{\ell}$ for $\ell=1,\dots,L-1$ and $\tilde b_{L}:=b_{L}$. The server receives the obfuscated initialization $\{\tilde W_{\ell}, \tilde b_{\ell}\}$; the secret transformations $\{R, \Pi_{\ell}, D_{\ell}\}$ remain with the data holder.

\subsection{Label Permutation (Classification)}
For a $C$-class classification problem, the data holder samples a random class permutation $\pi\in\mathcal{S}_{C}$ and relabels all training targets:
\[
  \tilde y^{(n)} \;:=\; \pi(y^{(n)}).
\]
Correspondingly, the rows of the final weight matrix $W_{L}$ and bias $b_{L}$ are permuted by $\pi$ before transmission, so that the network output is consistently permuted. After training, the inverse permutation $\pi^{-1}$ recovers the true class ordering.

\subsection{Output Transformation (Regression / MSE Loss)}
For regression with mean-squared-error loss, the data holder may sample a random orthogonal matrix $Q\in\mathrm{O}(d_{L})$ and transform both the network output and the target:
\[
  \tilde y^{(n)} \;:=\; Q\, y^{(n)}, \qquad \tilde W_{L} \;:=\; Q\, W_{L}\, T_{L-1}^{-1}, \qquad \tilde b_{L} \;:=\; Q\, b_{L}.
\]
Since orthogonal transformations preserve Euclidean distances, the MSE loss is invariant:
\[
  \|Q\,(\hat y - y)\|_{2}^{2} \;=\; \|\hat y - y\|_{2}^{2}.
\]
Gradients computed on the server thus have identical norms and directions (up to the rotation $Q$), ensuring that training dynamics are unaffected.

\subsection{Remote Training and Parameter Recovery}
The server receives the obfuscated dataset $\{\tilde x^{(n)}, \tilde y^{(n)}\}$ and the obfuscated initial parameters $\{\tilde W_{\ell}, \tilde b_{\ell}\}$. Standard gradient-based training (SGD, Adam, etc.) is performed on the server, yielding trained parameters $\{\tilde W_{\ell}^{*}, \tilde b_{\ell}^{*}\}$.

Upon return, the data holder recovers the true parameters by inverting the transformations:
\[
\begin{aligned}
  W_{1}^{*} &= T_{1}^{-1}\,\tilde W_{1}^{*}\, R, \\
  W_{\ell}^{*} &= T_{\ell}^{-1}\,\tilde W_{\ell}^{*}\, T_{\ell-1}, \quad \ell=2,\dots,L-1, \\
  W_{L}^{*} &= Q^{-1}\,\tilde W_{L}^{*}\, T_{L-1} \quad \text{(regression)}, \\
  W_{L}^{*} &= \Pi_{C}^{-1}\,\tilde W_{L}^{*}\, T_{L-1} \quad \text{(classification)},
\end{aligned}
\]
and similarly for biases. Here $\Pi_{C}$ denotes the permutation matrix corresponding to $\pi$. The recovered network $f_{\theta^{*}}$ operates on unobfuscated inputs $x$ and produces correct outputs $y$.

\subsection{Security Properties}
\begin{itemize}
  \item \textbf{Data obfuscation.} The server only observes $\{\tilde x^{(n)}\}=\{R\,x^{(n)}\}$. Without knowledge of $R$, the original inputs cannot be recovered; $R$ acts as a one-time pad in the input space.
  \item \textbf{Label obfuscation.} Class labels are permuted by $\pi$ (classification) or rotated by $Q$ (regression). The server cannot determine the true label semantics.
  \item \textbf{Model obfuscation.} The server trains and returns parameters in the obfuscated representation. Without $\{R, \Pi_{\ell}, D_{\ell}, Q\}$, the returned $\{\tilde W_{\ell}^{*}\}$ cannot be converted to a usable model; any attempt to use them directly on original inputs produces meaningless outputs.
  \item \textbf{Non-transferability.} Even if the server caches the trained parameters, they are bound to the secret transformations held by the data owner. A different owner (or the same owner with refreshed secrets) cannot reuse cached parameters.
\end{itemize}

\subsection{Practical Considerations}
\begin{itemize}
  \item \textbf{Communication.} The protocol adds no communication overhead beyond the initial parameter exchange; gradient updates and loss values are computed entirely on the server.
  \item \textbf{Extensions.} The approach extends to convolutional layers (via channel permutations and per-channel scaling), attention mechanisms, and other architectures that exhibit analogous equivariance properties.
\end{itemize}

\noindent This protocol leverages the invariance group structure to enable obfuscated-outsourced training: the remote server performs standard optimization on transformed data and parameters, while the data holder retains exclusive knowledge of the secret transformations required to interpret the trained model.

\section{Parametric Invariances in Self-Attention}

In this section, we describe two fundamental parameter invariances of the self-attention mechanism: a bilinear invariance associated with the query--key interaction, and a linear invariance associated with the value--output projection. These invariances hold independently of residual connections, normalization layers, and feed-forward sublayers, and reflect intrinsic non-identifiability in the attention parametrization.

\subsection{Query--Key Bilinear Invariance}

Consider a self-attention layer with projections
\begin{equation}
Q = X W_Q, \qquad K = X W_K,
\end{equation}
where \(X \in \mathbb{R}^{n \times d}\), \(W_Q, W_K \in \mathbb{R}^{d \times d_k}\).
The attention scores are given by
\begin{equation}
S = Q K^\top = X W_Q W_K^\top X^\top,
\end{equation}
and the attention weights are
\begin{equation}
A = \mathrm{softmax}\!\left(\frac{S}{\sqrt{d_k}}\right).
\end{equation}

\begin{lemma}[Query--Key Bilinear Invariance]
Let \(P \in \mathbb{R}^{d_k \times d_k}\) be any invertible matrix. Define transformed parameters
\begin{equation}
W_Q' = W_Q P, \qquad W_K' = W_K P^{-T}.
\end{equation}
Then the attention scores and attention weights are unchanged, i.e.,
\begin{equation}
Q' K'^\top = Q K^\top, \qquad A' = A.
\end{equation}
\end{lemma}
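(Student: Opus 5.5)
The proof is a direct substitution, and I would do it in two short steps. First I compute the transformed projections: $Q' = X W_Q' = X W_Q P = QP$ and $K' = X W_K' = X W_K P^{-T} = K P^{-T}$.

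Second, I form the score matrix and use $(AB)^\top = B^\top A^\top$ together with $(P^{-T})^\top = P^{-1}$. This gives
\[
Q' K'^\top = Q P \,(K P^{-T})^\top = Q P P^{-1} K^\top = Q K^\top .
\]
Equivalently, $X W_Q' W_K'^\top X^\top = X W_Q P P^{-1} W_K^\top X^\top = S$. The only facts needed are that transposition commutes with inversion, $(P^{-1})^\top = (P^\top)^{-1} =: P^{-T}$, and that $P$ is invertible. The shapes also match, since $P$ is $d_k\times d_k$ and therefore $W_Q P$ and $W_K P^{-T}$ remain in $\mathbb{R}^{d\times d_k}$.

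For the attention weights, I note that $A' = \mathrm{softmax}(S'/\sqrt{d_k})$ with the same normalising constant $\sqrt{d_k}$, because $d_k$ is unchanged. Softmax is applied row-wise as a deterministic function of its argument, so $S' = S$ immediately gives $A' = A$. If a mask or temperature is present, it is applied identically to $S$ and $S'$, so the conclusion still holds.

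I do not expect a genuine obstacle. The one point needing care is the transpose bookkeeping, namely that $(P^{-T})^\top = P^{-1}$ and not $P^{-T}$. It may also be worth remarking that the invariance group here is all of $\mathrm{GL}(d_k)$, much larger than the permutation and diagonal group of the earlier sections, because the scores depend on $W_Q$ and $W_K$ only through the product $W_Q W_K^\top$.
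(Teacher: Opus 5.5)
Your proposal is correct and takes the same route as the paper's proof: substitute the transformed projections, use $(P^{-T})^\top = P^{-1}$ to cancel $P$ in $Q'K'^\top = X W_Q P P^{-1} W_K^\top X^\top$, and conclude $A'=A$ because the row-wise softmax, with the same $\sqrt{d_k}$ scaling, is applied to identical scores. Your remarks on shape compatibility and masking are harmless additions that the paper does not make.
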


\begin{proof}
Under the parameter transformation,
\begin{align}
Q' &= X W_Q P, \\
K' &= X W_K P^{-T}.
\end{align}
Hence,
\begin{equation}
Q' K'^\top
= X W_Q P (P^{-1} W_K^\top) X^\top
= X W_Q W_K^\top X^\top
= Q K^\top.
\end{equation}
Since the softmax is applied elementwise to the rows of the score matrix, identical scores yield identical attention weights.
\end{proof}

This invariance reflects the fact that self-attention depends only on the bilinear form induced by \(W_Q W_K^\top\), and not on the individual bases used to represent queries and keys.

---

\subsection{Value--Output Linear Invariance}

Let the value projection and output projection be defined as
\begin{equation}
V = X W_V, \qquad Y = A V W_O,
\end{equation}
where \(W_V \in \mathbb{R}^{d \times d_v}\) and \(W_O \in \mathbb{R}^{d_v \times d}\).

\begin{lemma}[Value--Output Linear Invariance]
Let \(R \in \mathbb{R}^{d_v \times d_v}\) be any invertible matrix. Define transformed parameters
\begin{equation}
W_V' = W_V R, \qquad W_O' = R^{-1} W_O.
\end{equation}
Then the attention output is unchanged, i.e.,
\begin{equation}
A V' W_O' = A V W_O.
\end{equation}
\end{lemma}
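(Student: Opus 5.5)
The plan is a direct substitution followed by associativity of matrix multiplication, mirroring the proof of the Query--Key Bilinear Invariance lemma. The first thing to check is that the attention weights \(A\) do not depend on \(W_V\) or \(W_O\). By definition \(A=\mathrm{softmax}(XW_QW_K^\top X^\top/\sqrt{d_k})\) involves only \(W_Q\) and \(W_K\). So the transformation \(W_V\mapsto W_VR\), \(W_O\mapsto R^{-1}W_O\) leaves \(A\) unchanged, and the same matrix \(A\) appears on both sides of the claimed identity.

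Next I would compute the transformed value matrix:
\[
V' = XW_V' = XW_VR = VR.
\]
Substituting into the output gives
\[
AV'W_O' = A(VR)(R^{-1}W_O) = AV(RR^{-1})W_O = AVW_O.
\]
Here associativity regroups the product, and \(RR^{-1}=I_{d_v}\) because \(R\) is invertible. The dimensions are consistent throughout: \(R\in\mathbb{R}^{d_v\times d_v}\), so \(W_VR\in\mathbb{R}^{d\times d_v}\) and \(R^{-1}W_O\in\mathbb{R}^{d_v\times d}\), and all the products are defined.

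There is no real obstacle. The only point needing care is the first step, confirming that \(A\) is untouched by the reparameterization. This is what separates this invariance from the query--key one, where the cancellation happens inside the softmax argument. Here the cancellation happens entirely after the nonlinear softmax, because the product \(AVW_O\) is linear in the value path. In contrast to the rectified-monomial layers of Section II, no restriction to permutations or positive diagonals is needed, and any invertible \(R\) works.
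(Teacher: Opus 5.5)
Your proposal is correct and follows essentially the same route as the paper: substitute \(V'=XW_VR=VR\), then regroup \(A(VR)(R^{-1}W_O)\) by associativity so that \(RR^{-1}=I\) cancels. The one difference is that you state explicitly that \(A\) depends only on \(W_Q,W_K\) and is therefore unaffected by the reparameterization; the paper leaves this implicit by writing the same \(A\) on both sides.
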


\begin{proof}
Under the transformation,
\begin{equation}
V' = X W_V' = X W_V R = V R.
\end{equation}
The attention aggregation yields
\begin{equation}
A V' = A (V R) = (A V) R.
\end{equation}
Applying the transformed output projection,
\begin{equation}
A V' W_O' = (A V) R (R^{-1} W_O) = A V W_O,
\end{equation}
which proves the claim.
\end{proof}

This invariance shows that the value space admits an arbitrary change of basis, provided it is undone by the output projection. Consequently, only the composition \(W_V W_O\) is functionally identifiable from the attention output.

---

\subsection{Discussion}

The query--key bilinear invariance and the value--output linear invariance together imply that the self-attention mechanism possesses a large continuous symmetry group. These symmetries are independent of residual connections and normalization, and they explain why individual attention heads, value directions, or projection matrices are not uniquely identifiable. Subsequent architectural components such as LayerNorm, nonlinear feed-forward networks, and the final language-model head partially or completely break these invariances.

\subsection{Permutation Equivariance of a Pre-LN Transformer Block}

We consider a single Transformer block with the \emph{pre-layer-normalization (Pre-LN)} architecture, defined by
\begin{align}
Y &= X + \mathrm{SA}(\mathrm{LN}(X)), \label{eq:preln_sa}\\
Z &= Y + \mathrm{FFN}(\mathrm{LN}(Y)), \label{eq:preln_ffn}
\end{align}
where \(X \in \mathbb{R}^{n \times d}\) is the input sequence representation, \(\mathrm{SA}\) denotes multi-head self-attention including the output projection, \(\mathrm{FFN}\) is a position-wise feed-forward network with an elementwise nonlinearity, and \(\mathrm{LN}\) denotes Layer Normalization applied independently to each token.

\begin{theorem}[Permutation Equivariance]
Let \(P \in \mathbb{R}^{d \times d}\) be a permutation matrix acting on the feature dimension, and define the transformed input \(X' = X P\). Suppose the model parameters are transformed as
\begin{align}
W_Q' &= P^{-1} W_Q, &
W_K' &= P^{-1} W_K, \nonumber\\
W_V' &= P^{-1} W_V P, &
W_O' &= P^{-1} W_O P, \label{eq:attn_param_transform}\\
W_1' &= P^{-1} W_1, &
W_2' &= W_2 P. \label{eq:ffn_param_transform}
\end{align}
Then the Transformer block satisfies the equivariance relation
\begin{equation}
\mathrm{Block}(X P) = \mathrm{Block}(X)\, P.
\end{equation}
\end{theorem}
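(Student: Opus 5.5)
The plan is to push the feature permutation \(P\) through the block one sublayer at a time. Each step uses a single identity: a permutation matrix satisfies \(PP^{-1}=I\), and tokenwise operations that are symmetric in the feature coordinates commute with \(P\). I would prove, in order: (i) \(\mathrm{LN}(XP)=\mathrm{LN}(X)P\); (ii) \(\mathrm{SA}'(\mathrm{LN}(X)P)=\mathrm{SA}(\mathrm{LN}(X))P\) under the primed attention parameters; (iii) the same statement for \(\mathrm{FFN}'\); and (iv) the conclusion via the two residual connections in \eqref{eq:preln_sa}–\eqref{eq:preln_ffn}.

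\textbf{Layer normalization.} For step (i), note that for each token (row) the mean and variance over the \(d\) features are invariant under reordering the features. Centering and normalizing therefore commute with right-multiplication by \(P\). The elementwise affine parameters of \(\mathrm{LN}\) must be transformed as \(\gamma'=\gamma P\) and \(\beta'=\beta P\), which the statement leaves implicit. I would make this convention explicit; it is automatic if the gain and bias are scalars.

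\textbf{Self-attention.} With \(U:=\mathrm{LN}(X)\), we get \(Q'=UPP^{-1}W_Q=Q\) and \(K'=K\). The scores, and hence the softmax weights \(A\), are therefore literally unchanged. This is stronger than the Query–Key Bilinear Invariance lemma, which is not even needed here. For the values, \(V'=UPP^{-1}W_VP=VP\), and then
\[
AV'W_O'=AVPP^{-1}W_OP=AVW_O\,P.
\]
An equivalent way to see this is that the inner pair \(W_VP,\;P^{-1}W_O\) is exactly an instance of the Value–Output Linear Invariance lemma with \(R=P\). After discarding it, only \(P^{-1}W_V\) and \(W_OP\) remain, and these visibly give \(\mathrm{SA}(U)P\). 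The residual step then yields \(Y'=XP+\mathrm{SA}(U)P=YP\).

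\textbf{Feed-forward sublayer.} For step (iii), applying (i) again gives \(\mathrm{LN}(YP)=\mathrm{LN}(Y)P\). Then \(\mathrm{LN}(Y)PP^{-1}W_1=\mathrm{LN}(Y)W_1\), so the hidden pre-activations, and hence the elementwise nonlinearity and the hidden bias, are unchanged. The output \(\sigma(\cdot)W_2P\) equals the original FFN output times \(P\), provided the output bias is also transformed as \(b_2'=b_2P\). The second residual connection then gives \(Z'=ZP\).

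\textbf{Main obstacle.} I expect the difficulty to lie in the dimensions and the multi-head structure of the value path. The transformations \(W_V'=P^{-1}W_VP\) and \(W_O'=P^{-1}W_OP\) only typecheck when \(d_v=d\). In the multi-head case, \(V'=VP\) may mix columns belonging to different heads, and each head applies its own \(A_h\). In that situation the identity \(\mathrm{concat}_h(A_hV_h')=\mathrm{concat}_h(A_hV_h)P\) can fail.

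I would handle this by first proving the result for a single head, or for the concatenated \(d\times d\) value map with one shared \(A\). I would then note that the argument extends to the multi-head case in either of two ways. The first is to use the head-compatible form \(W_V'=P^{-1}W_V\), \(W_O'=W_OP\), which is equivalent through the Value–Output lemma and always works headwise. The second is to restrict the right-hand factor on \(W_V\) to a permutation that preserves the head blocks. The inner cancellation is the only place where this issue arises, so the proof will state this hypothesis explicitly there.
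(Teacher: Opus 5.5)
Your proposal is correct and follows the paper's proof essentially step for step: LayerNorm equivariance, then $Q'=Q$, $K'=K$, $V'=VP$ so that $A'=A$ and $A'V'W_O'=AVW_O\,P$, then the two residual connections with the FFN in between. The extra conditions you flag are silently omitted by the paper and are genuinely needed: $\mathrm{LN}(xP)=\mathrm{LN}(x)P$ requires $\gamma'=\gamma P$, $\beta'=\beta P$ (or constant $\gamma,\beta$); the value/output transforms require $d_v=d$; the paper's single-$A$ computation does not cover multi-head attention with a head-mixing $P$ (and there your form $W_V'=P^{-1}W_V$, $W_O'=W_OP$ is no longer equivalent to the stated one via the Value--Output lemma, so that option amends the statement rather than proving it); and your row-convention FFN step, in which the hidden pre-activations are literally unchanged, is more accurate than the paper's column-form $W_2\sigma(W_1x)$ appeal to $\sigma$ commuting with permutations.
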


\begin{proof}
We proceed component by component.

\paragraph{LayerNorm equivariance.}
For a vector \(x \in \mathbb{R}^d\), LayerNorm is defined as
\begin{equation}
\mathrm{LN}(x) = \gamma \odot \frac{x - \mu(x)}{\sigma(x)} + \beta,
\end{equation}
where
\(
\mu(x) = \frac{1}{d}\sum_{i=1}^d x_i
\)
and
\(
\sigma(x) = \sqrt{\frac{1}{d}\sum_{i=1}^d (x_i - \mu(x))^2}.
\)
Since permutations preserve both the mean and variance, we have
\begin{equation}
\mu(xP) = \mu(x), \qquad \sigma(xP) = \sigma(x),
\end{equation}
which implies
\begin{equation}
\mathrm{LN}(xP) = \mathrm{LN}(x)\,P.
\end{equation}
Applying this row-wise yields
\begin{equation}
\mathrm{LN}(X P) = \mathrm{LN}(X)\,P.
\end{equation}

\paragraph{Self-attention equivariance.}
Let
\begin{equation}
Q = \mathrm{LN}(X) W_Q, \quad
K = \mathrm{LN}(X) W_K, \quad
V = \mathrm{LN}(X) W_V,
\end{equation}
and define the attention weights
\begin{equation}
A = \mathrm{softmax}\!\left(\frac{QK^\top}{\sqrt{d_k}}\right).
\end{equation}
Using the transformed parameters in \eqref{eq:attn_param_transform} and the identity
\(\mathrm{LN}(X P) = \mathrm{LN}(X) P\),
we obtain
\begin{align}
Q' &= \mathrm{LN}(X P) W_Q' = Q, \\
K' &= \mathrm{LN}(X P) W_K' = K, \\
V' &= \mathrm{LN}(X P) W_V' = V P.
\end{align}
Hence \(A' = A\), and the attention output satisfies
\begin{equation}
\mathrm{SA}'(\mathrm{LN}(X P))
= A' V' W_O'
= (A V W_O) P
= \mathrm{SA}(\mathrm{LN}(X)) P.
\end{equation}

\paragraph{First residual connection.}
Substituting into \eqref{eq:preln_sa},
\begin{equation}
Y' = X P + \mathrm{SA}'(\mathrm{LN}(X P))
= (X + \mathrm{SA}(\mathrm{LN}(X))) P
= Y P.
\end{equation}

\paragraph{Feed-forward network equivariance.}
The FFN is defined as
\begin{equation}
\mathrm{FFN}(x) = W_2\,\sigma(W_1 x),
\end{equation}
where \(\sigma\) is applied elementwise. Using the parameter transformations in
\eqref{eq:ffn_param_transform}, and noting that permutations commute with elementwise nonlinearities, we have
\begin{equation}
\mathrm{FFN}'(\mathrm{LN}(Y P)) = \mathrm{FFN}(\mathrm{LN}(Y))\,P.
\end{equation}

\paragraph{Second residual connection.}
Substituting into \eqref{eq:preln_ffn},
\begin{equation}
Z' = Y' + \mathrm{FFN}'(\mathrm{LN}(Y'))
= (Y + \mathrm{FFN}(\mathrm{LN}(Y))) P
= Z P.
\end{equation}

This establishes that the full Pre-LN Transformer block is equivariant under permutations of the feature dimension.
\end{proof}

\section{Conclusion}
\noindent This paper has provided a systematic study of parametric invariances in compositional models built from alternating polynomial and rectified monomial layers. The invariance group—generated by input-space linear reparameterizations and inter-layer permutation/diagonal scalings—was characterized explicitly, and several applications were developed. For optimization, an invariance-aware geometric program was formulated to minimize regularizers within equivalence classes, yielding improved conditioning without altering the realized function. For quantization and storage, a related geometric program was shown to minimize parameter ranges, reducing dynamic range requirements. Two obfuscation protocols were presented: one enabling private inference where neither party learns the other's secrets, and another enabling privacy-preserving remote training where a cloud server trains on transformed data and parameters without access to the originals. Finally, analogous invariances were identified in self-attention mechanisms—bilinear symmetry in the query–key interaction and linear symmetry in the value–output pathway—along with permutation equivariance of Pre-LN Transformer blocks. Together, these results establish that understanding parametric invariances is not merely a theoretical curiosity but a practical tool for compression, obfuscation, and efficient optimization in modern neural architectures.

\balance

\end{document}